\documentclass[11pt,a4paper]{article}
\usepackage[utf8]{inputenc}
\usepackage[english]{babel}
\usepackage{amsmath,amssymb,amsthm}
\usepackage{geometry}

\newtheorem{theorem}{Theorem}[section]
\newtheorem{remark}[theorem]{Remark}
\newtheorem{proposition}[theorem]{Proposition}

\numberwithin{equation}{section}

\begin{document}

\title{\textbf{Calabi-Bernstein type theorems for entire solutions of certain nonlinear elliptic equations in the plane}}
\author{Michael Bildhauer \quad Martin Fuchs}
\date{}
\maketitle

\begin{abstract}
We provide some extensions of the classical Calabi-Bernstein theorem to the case of entire solutions of certain nonlinear elliptic equations in two variables. Using an appropriate version of the Calabi correspondence, which formulates a duality between minimal graphs in $\mathbb{R}^3$ and maximal spacelike graphs in $\mathbb{L}^3$, our results are a consequence of recent Bernstein type results.
\end{abstract}

\noindent
\textbf{Mathematics Subject Classification.} 53A10, 58J05, 35J60. \\
\textbf{Keywords.} Calabi-Bernstein theorem, Calabi correspondence, Bernstein type results.

\section{Introduction}

The classical Bernstein theorem for nonparametric minimal surfaces states that the affine functions are the only entire solutions $u : \mathbb{R}^n \rightarrow \mathbb{R}$ of the equation
\begin{equation}
\text{div} \left( \frac{\nabla u}{\sqrt{1 + |\nabla u|^2}} \right) = 0 \quad \text{on } \mathbb{R}^n
\end{equation}
as it was shown by Bernstein for the case $n=2$ in his famous papers [1] and [2] around a hundred years ago. Actually Bernstein's result extends to dimensions $n \le 7$, which geometrically means that for $n \le 7$ any entire minimal graph in Euclidean space $\mathbb{R}^{n+1}$ must be a hyperplane. This fundamental insight was established stepwise by Fleming [3], De Giorgi [4], Almgren [5] and Simons [6] in the sixties of the last century, and in 1969 Bombieri, De Giorgi and Giusti [7] constructed an example showing the failure of Bernstein's theorem for dimensions $n \ge 8$. We refer the reader being interested in the history of the Bernstein problem and its geometrical background to the monographs of Dierkes, Hildebrandt, Küster, Wohlrab [8], [9], of Nitsche [10] and of Osserman [11].

Passing from nonparametric minimal surfaces in Euclidean space $\mathbb{R}^{n+1}$ to maximal spacelike graphical hypersurfaces in the Lorentz-Minkowski space $\mathbb{L}^{n+1}$, equation (1.1) has to be replaced by
\begin{equation}
\text{div} \left( \frac{\nabla u}{\sqrt{1 - |\nabla u|^2}} \right) = 0 \quad \text{on } \mathbb{R}^n
\end{equation}
together with the requirement that
\begin{equation}
|\nabla u(x)| < 1 \quad \text{for all } x \in \mathbb{R}^n.
\end{equation}
The underlying geometric background leading to equation (1.2) and an appropriate explanation of the terminology can be found for instance in [12], [13], [14], [15], [16], [17], [18], [19] and [20]. Given equation (1.2) endowed with condition (1.3) one might ask, if a Bernstein type result is still true.

Surprisingly it turns out that for any dimension $n \ge 2$ entire solutions of (1.2), (1.3) must be affine functions. This fundamental result, known as Calabi-Bernstein theorem, was presented in 1968 and published 1970 in [15] by Calabi for the case $n \le 4$ and extended in 1976 by Cheng and Yau to dimensions $n > 4$ (see [20]).

The purpose of our note is to replace (1.2) by a class of slightly more general nonlinear elliptic equations (combined with adequate variants of condition (1.3)) and to prove Calabi-Bernstein type theorems at least for the twodimensional case. The limitation that $n = 2$ is required for the rest of this paper and enters for two reasons: first, we will make use of an adjusted version of the Calabi correspondence (see, e.g., [21], [15], [22], [23], [24] and compare for instance Lemma 3.2 in [24]), which allows us to transform our problem to the investigation of the Bernstein property of entire solutions $u$ of the Euler-Lagrange equations associated to variational integrals being of linear growth with respect to $|\nabla u|$. Unfortunately this duality argument seems to be limited to the case $n = 2$. Secondly, even if it would be possible to transform Calabi-Bernstein problems into Bernstein type problems in arbitrary dimensions, we could not benefit from this results, since up to now Bernstein type theorems are only available in two dimensions, we refer to [25] and [26], [27], [28].

After these preliminary remarks we are going to fix our assumptions: consider a function $a : [0, t_0) \rightarrow \mathbb{R}$ being of class $C^2$ on the interval [0, $t_0$), where $t_0$ denotes a number in $(0, \infty)$. We require
\begin{equation}
a'(0) = 0, \quad \lim_{t \rightarrow t_0} a'(t) = \infty, \quad a''(t) > 0 \quad \text{for all } t \ge 0,
\end{equation}
and note that $a$ is strictly convex. Moreover, $a'$ is a strictly increasing mapping from $[0, t_0)$ onto $[0, \infty)$.

In addition to (1.4) we assume
\begin{equation}
\liminf_{t \rightarrow t_0} \frac{a''(t)}{(a'(t))^\mu} > 0
\end{equation}
for some exponent $\mu > 1$. With these data we introduce the problem
\begin{equation}
\text{div} \left( \frac{a'(|\nabla u|)}{|\nabla u|} \nabla u \right) = 0 \quad \text{in } \mathbb{R}^2
\end{equation}
together with the gradient constraint
\begin{equation}
|\nabla u(x)| < t_0 \quad \text{for all } x \in \mathbb{R}^2.
\end{equation}
Note that (1.6) is just the Euler-Lagrange equation associated with the variational integral $\int a(|\nabla u|) \, dx$ and that we can recover the maximal surface case described in (1.2), (1.3) by letting $a(t) := -\sqrt{1 - t^2}$ for the choices $t_0 = 1$ and $\mu = 3$ in conditions (1.4) and (1.5). Our first result is an extension of the classical Calabi-Bernstein theorem to this more general situation.

\begin{theorem}
Let $n = 2$ and assume that conditions (1.4) and (1.5) are satisfied. Assume further that $u \in C^2(\mathbb{R}^2)$ is an entire solution of (1.6) together with (1.7). Then, if we require that
\begin{equation}
\mu \ge 3
\end{equation}
holds, $u$ must be an affine function.
\end{theorem}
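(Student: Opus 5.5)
Via the Calabi correspondence, (1.6) is the closedness condition for a rotated flux field. On the simply connected plane we write
\[
\frac{a'(|\nabla u|)}{|\nabla u|}\,\nabla u = (\partial_2 v, -\partial_1 v)
\]
for some $v\in C^2(\mathbb{R}^2)$; this is possible because the field is divergence free. Set $s:=|\nabla v| = a'(|\nabla u|)\in[0,\infty)$, so that $|\nabla u| = (a')^{-1}(|\nabla v|)=:b(|\nabla v|)$ with $b:[0,\infty)\to[0,t_0)$ onto. The constraint (1.7) makes this inversion legitimate, and (1.4) ensures $b$ is $C^1$ and strictly increasing. Rotating back, $\nabla u$ is a rotation of $\nabla v$ scaled by $b(|\nabla v|)/|\nabla v|$, and $\operatorname{curl}\nabla u=0$ becomes
\[
\text{div}\left(\frac{b(|\nabla v|)}{|\nabla v|}\nabla v\right)=0 .
\]
This is the Euler--Lagrange equation of $\int f(|\nabla v|)\,dx$ with $f(s):=\int_0^s b(r)\,dr$, i.e.\ $f=a^*$ is the Legendre conjugate of $a$ (up to a constant). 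Since $b$ is bounded by $t_0$, $f$ has linear growth, $f(s)\le t_0 s$, and $f''(s)=b'(s)=1/a''(b(s))>0$. So $v$ is an entire solution of a linear growth variational equation, and if $v$ is affine then $\nabla v$ is constant, hence $\nabla u$ is constant and $u$ is affine.

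Next we translate (1.5) into the ellipticity condition needed for the recent Bernstein type results for linear growth integrands [25]–[28]. Those results require $\mu$-ellipticity:
\[
c_1(1+s)^{-\mu}\le f''(s),\qquad f'(s)/s\le c_2 \text{ or comparable},
\]
with a restriction on $\mu$. For large $s$, set $t=b(s)\to t_0$, so that $s=a'(t)$ and $f''(s)=1/a''(t)$. Condition (1.5) gives $a''(t)\ge c\,(a'(t))^\mu=c\,s^\mu$, hence $f''(s)\le c^{-1}s^{-\mu}$. This is an \emph{upper} bound on the decay of $f''$, so $f$ is ``at least $\mu$-degenerate''. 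The Bernstein result I would invoke is the two-dimensional one for integrands with $f''(s)\lesssim (1+s)^{-\mu}$, $\mu>1$: for the minimal surface integrand $\sqrt{1+s^2}$, which corresponds to $\mu=3$, one has the classical Bernstein theorem. Presumably the cited theorem holds in the regime of \emph{strong} degeneracy, $\mu\ge3$, i.e.\ at least as degenerate as the area integrand. This matches hypothesis (1.8), and the case $a=-\sqrt{1-t^2}$ gives exactly $f(s)=\sqrt{1+s^2}$. Near $s=0$, $f''(0)=1/a''(0)>0$ and $f'' $ is continuous, so $f''$ is bounded above and below on compacts; $f\in C^2$ follows from $a\in C^2$.

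So the steps, in order, are:
\begin{enumerate}
\item Construct $v$ and derive its equation. This is routine, with care at points where $\nabla u=0$; the field $a'(|\nabla u|)\nabla u/|\nabla u|$ is $C^1$ there since $a'(0)=0$ and $a''(0)>0$.
\item Verify that $f$ has linear growth, is $C^2$ and strictly convex, and satisfies the decay bound on $f''$ derived above.
\item Apply the cited Bernstein theorem to conclude $v$ is affine, and then $u$ is affine.
\end{enumerate}

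The main obstacle is step 2–3: matching precisely the hypotheses of the available Bernstein theorem, in particular checking that (1.5) with $\mu\ge3$ yields the required structure of $f$ at infinity. That theorem might also need a lower bound on $f''$, which we do not have; if so, I would try to show it is not used for $\mu\ge 3$, or reduce via a comparison integrand. I also need to check that $v$ is of class $C^2$ (or whatever regularity the cited result requires); since $v$ is only $C^2$ a priori, I would rely on the cited results being stated for $C^2$ or $C^1$ entire solutions.
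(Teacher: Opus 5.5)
Your proposal is correct and is essentially the paper's proof. The paper likewise passes via the rotated flux $W=\nabla w$ to the Legendre-dual linear growth integrand $g$, with $g'=(a')^{-1}$ and $g''=1/a''((a')^{-1})$, turns (1.5) into $g''(\rho)\le c(1+\rho)^{-\mu}$, and applies the two-dimensional Bernstein theorem valid for $\mu\ge 3$ (Theorem 1.2 of [26], contained in Theorem 1.4 of [25]); the hypotheses it verifies for that theorem are only two-sided linear growth $c_1(\rho-1)\le g(\rho)\le c_2(\rho+1)$ (the lower bound via convexity and $g'(1)>0$, which you should record), $g'(0)=0$, $g''>0$ and this upper bound on $g''$, so your worry about a missing lower bound on $f''$ does not arise.
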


\begin{remark}
We do not know, if Theorem 1.1 extends to dimensions $n \ge 3$ or if condition (1.8) can be weakened in the sense that the bound $\mu > 1$ is already sufficient for the Calabi-Bernstein property.
\end{remark}

Let us now replace (1.2) and (1.3) by a seemingly related problem: we consider a function $u \in C^2(\mathbb{R}^2)$ satisfying
\begin{equation}
\partial_1 \left( \frac{\partial_1 u}{\sqrt{1 - (\partial_1 u)^2}} \right) + \partial_2 \left( \frac{\partial_2 u}{\sqrt{1 - (\partial_2 u)^2}} \right) = 0 \quad \text{on } \mathbb{R}^2,
\end{equation}
\begin{equation}
|\partial_i u(x)| < 1 \quad \text{for } i = 1,2 \text{ and all } x \in \mathbb{R}^2
\end{equation}
and ask, if $u$ is an affine function. However, one easily checks that
\[
u(x_1, x_2) := \sqrt{1 + (x_1)^2} - \sqrt{1 + (x_2)^2}
\]
is a nonaffine entire solution of (1.9) and (1.10), which means that in the situation of (1.9) and (1.10) (``splitting case'') we have to impose some extra conditions guaranteeing that $\nabla u = \text{const}$. In this spirit it holds

\begin{theorem}
Consider functions $a_i : [0, t_i) \rightarrow \mathbb{R}$, $i = 1,2$, of class $C^2$ on $[0, t_i)$ with numbers $0 < t_i < \infty$ satisfying the conditions (1.4) and (1.5) with $t_0$ replaced by $t_i$ and exponents $\mu_i > 1$ again for $i = 1,2$. Let $u \in C^2(\mathbb{R}^2)$ solve
\begin{equation}
\partial_1 \left( \frac{a_1'(|\partial_1 u|)}{|\partial_1 u|} \partial_1 u \right) + \partial_2 \left( \frac{a_2'(|\partial_2 u|)}{|\partial_2 u|} \partial_2 u \right) = 0 \quad \text{on } \mathbb{R}^2,
\end{equation}
\begin{equation}
|\partial_i u(x)| < t_i, \quad i = 1,2, \text{ for all } x \in \mathbb{R}^2.
\end{equation}
Then, if for some constant $K > 0$ it holds
\begin{equation}
a_1'(|\partial_1 u|) \le K (a_2'(|\partial_2 u|) + 1) \quad \text{on } \mathbb{R}^2
\end{equation}
or
\begin{equation}
a_2'(|\partial_2 u|) \le K (a_1'(|\partial_1 u|) + 1) \quad \text{on } \mathbb{R}^2,
\end{equation}
$u$ is an affine function.
\end{theorem}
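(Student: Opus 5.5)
We follow the strategy announced in the introduction: a Calabi-type duality turns (1.11), (1.12) into a Bernstein-type problem for a splitting-type variational integral of linear growth. The splitting Bernstein results of [26]--[28] are then applied to the dual function.

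\textbf{Step 1 (dual potential).} Write $\sigma_i := \frac{a_i'(|\partial_i u|)}{|\partial_i u|}\partial_i u = \operatorname{sgn}(\partial_i u)\, a_i'(|\partial_i u|)$. By (1.4), $a_i'$ maps $[0,t_i)$ onto $[0,\infty)$, so $\sigma_i$ ranges over all of $\mathbb{R}$. Equation (1.11) says $\operatorname{div}(\sigma_1,\sigma_2)=0$ on the simply connected plane. Hence there is $w\in C^2(\mathbb{R}^2)$ with
\[
\partial_2 w=\sigma_1,\qquad \partial_1 w=-\sigma_2 .
\]
Inverting the odd strictly increasing maps $s\mapsto \operatorname{sgn}(s)a_i'(|s|)$ gives $\partial_1 u = \psi_1(\partial_2 w)$ and $\partial_2 u = -\psi_2(\partial_1 w)$, where $\psi_i := (\operatorname{sgn}\cdot\, a_i'(|\cdot|))^{-1}:\mathbb{R}\to(-t_i,t_i)$. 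The compatibility condition $\partial_2\partial_1u=\partial_1\partial_2 u$ becomes
\[
\partial_2\bigl(\psi_1(\partial_2 w)\bigr)+\partial_1\bigl(\psi_2(\partial_1 w)\bigr)=0 .
\]
This is the Euler--Lagrange equation of $\int F_2(\partial_1 w)+F_1(\partial_2 w)\,dx$ with $F_i'=\psi_i$, i.e.\ $F_i=a_i^*$, the Legendre conjugate. Since $|\psi_i|<t_i$, each $F_i$ has linear growth. Moreover $F_i''(s)=1/a_i''(\psi_i(s))$, and with $|s|=a_i'(t)$ condition (1.5) yields $F_i''(s)\le c(1+|s|)^{-\mu_i}$ for $|s|$ large. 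So $F_i$ is $\mu_i$-elliptic in the sense of [26]--[28], with $\mu_i>1$.

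\textbf{Step 2 (apply the splitting Bernstein theorem).} Conditions (1.13), (1.14) read $|\partial_2 w|\le K(|\partial_1 w|+1)$ or $|\partial_1 w|\le K(|\partial_2 w|+1)$ on $\mathbb{R}^2$. These are precisely the extra hypotheses under which the cited Bernstein-type results for splitting-type linear-growth integrals assert that an entire solution $w$ is affine. There the restriction $\mu>1$ suffices, which explains why no analogue of (1.8) appears in the present statement. We would quote that theorem to conclude $\nabla w\equiv$ const.

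\textbf{Step 3.} Then $\sigma_1,\sigma_2$ are constant, hence $\partial_1 u=\psi_1(\sigma_1)$ and $\partial_2 u=-\psi_2(-\sigma_2)$ are constant, and $u$ is affine.

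The main obstacle is Step 2: checking that the hypotheses of the cited splitting-type Bernstein theorem (regularity class of $w$, exact form of the $\mu$-ellipticity and the growth conditions on $F_i$, and the precise form of the gradient comparison condition) match what the duality produces. In particular, we must verify that the bounds on $F_i''$ near $s=0$ and at infinity follow from $a_i''>0$ and (1.5). The $C^2$ regularity of $w$ requires $\psi_i\in C^1$, which holds because $a_i''>0$.
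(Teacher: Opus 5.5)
Your proposal is correct and is essentially the paper's proof: you use the same Legendre-dual potential $w$ (yours is the negative of the paper's, which is immaterial) and the same splitting density $g_2(|p_1|)+g_1(|p_2|)$ with $g_i''(\rho)\le c(1+\rho)^{-\mu_i}$, translate (1.13)/(1.14) into the balancing condition $|\partial_2 w|\le K(|\partial_1 w|+1)$ resp.\ $|\partial_1 w|\le K(|\partial_2 w|+1)$, and appeal to the splitting-type Bernstein theorem (Theorem 1.4 of [27]). One small correction: $w\in C^2$ follows from $u\in C^2$ together with $a_i\in C^2$ and $a_i'(0)=0$, which make $s\mapsto\operatorname{sgn}(s)\,a_i'(|s|)$ a $C^1$ map, whereas $\psi_i\in C^1$ is what gives $F_i\in C^2$.
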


\begin{remark}
According to (1.13), (1.14) some kind of balancing property of the entire solution $u$ implies that $\nabla u = \text{const}$.
\end{remark}

The paper is organized as follows: in Section 2 we present some examples of functions $a : [0, t_0) \rightarrow \mathbb{R}$ with the properties (1.4) and (1.5), which according to Theorem 1.1 means that for these densities problem (1.6) in combination with (1.7) has the Calabi-Bernstein property at least for the choice $\mu \ge 3$ in condition (1.5). Section 3 is devoted to the proof of Theorem 1.1, where we make use of an appropriate variant of the Calabi correspondence combined with a Bernstein type result. The proof of Theorem 1.3, presented in Section 4, proceeds along similar lines. Moreover, we show how to construct counterexamples to the Calabi-Bernstein theorem in the splitting case. Finally, in Section 5 we deal with some extensions presenting a few additional results.

\section{Some examples of functions $a$ with (1.4) and (1.5)}

\noindent
i) Let $\alpha \in (0, 1)$ and consider
\[
a(t) := -\alpha t - (1 - t)^\alpha, \quad 0 \le t < t_0 := 1.
\]
Then (1.4), (1.5) hold with the choice $\mu := \frac{2-\alpha}{1-\alpha} > 1$, and we have (1.8) for exponents $\alpha \in [1/2, 1)$. \\

\noindent
ii) Again for $\alpha \in (0, 1)$ and $t \in [0, t_0)$, $t_0 := 1$, we define
\[
a(t) := - [1 - t^2]^\alpha
\]
with the same results as for example i). \\

\noindent
iii) Consider a function $h : [0, \infty) \rightarrow [0, \infty)$ of class $C^2$ and with the properties
\begin{equation}
\begin{cases}
h(0) = 0, \quad \lim_{t \rightarrow \infty} h(t) = \infty, \quad h'(0) = 0, \\
h''(t) > 0 \quad \text{for all } t \ge 0.
\end{cases}
\end{equation}
Clearly $h$ is a strictly increasing function from $[0, \infty)$ onto $[0, \infty)$. Letting $t_0 := h^{-1}(1)$ we introduce ($0 \le t < t_0$)\begin{equation}a(t) := - h^{-1}(1 - h(t)).\end{equation}It holds\begin{proposition}With $h$ satisfying (2.1) we let $a$ according to (2.2). Then (1.4) and (1.5) hold with the choice $\mu = 3$, which in particular means that we have the Calabi-Bernstein theorem for problem (1.6) equipped with (1.7).\end{proposition}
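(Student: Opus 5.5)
The plan is to compute $a'$ and $a''$ explicitly via the inverse function theorem and then read off (1.4) and (1.5) directly. Write $g := h^{-1} : [0,\infty) \to [0,\infty)$. It is $C^2$ on $(0,\infty)$ with $g' = 1/h'(g)$, but it is singular at $0$ because $h'(0)=0$. For $0 \le t < t_0$ we have $0 < 1-h(t) \le 1$, so $s(t) := g(1-h(t)) > 0$. Hence $a = -s$ is $C^2$ on $[0,t_0)$ and never meets the singularity of $g$.

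\textbf{The first derivative.} Since $h(s) = 1 - h(t)$, implicit differentiation gives $h'(s)\,s' = -h'(t)$. This yields
\[
a'(t) = \frac{h'(t)}{h'(s(t))}.
\]
From $h'(0)=0$ we get $a'(0)=0$. As $t \to t_0$ we have $h(t) \to 1$, so $s(t) \to 0$ and $h'(s(t)) \to h'(0) = 0^+$. Meanwhile $h'(t) \to h'(t_0) > 0$, since $h'' > 0$ and $h'(0)=0$ force $h'(t_0)>0$. Therefore $a'(t) \to \infty$.

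\textbf{The second derivative.} Differentiating once more, with $s' = -h'(t)/h'(s)$, gives
\[
a''(t) = \frac{h''(t)}{h'(s)} - \frac{h'(t)\,h''(s)\,s'}{h'(s)^2} = \frac{h''(t)}{h'(s)} + \frac{h'(t)^2\, h''(s)}{h'(s)^3}.
\]
Both terms are positive. At $t=0$ we have $h'(0)=0$, but $s(0) = g(1) = t_0 > 0$ keeps $h'(s)$ away from zero, and $h''(0)>0$, so $a''(0) > 0$. Thus (1.4) holds.

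\textbf{Condition (1.5) with $\mu=3$.} Dropping the first term, we obtain
\[
\frac{a''(t)}{a'(t)^3} \ge \frac{h'(t)^2 h''(s)}{h'(s)^3} \cdot \frac{h'(s)^3}{h'(t)^3} = \frac{h''(s(t))}{h'(t)}.
\]
As $t \to t_0$ this tends to $h''(0)/h'(t_0) > 0$. So the $\liminf$ in (1.5) is positive and (1.5) holds with $\mu = 3$. Since $\mu = 3$ satisfies (1.8), Theorem 1.1 then gives the Calabi-Bernstein property for (1.6), (1.7).

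The main point needing care is regularity. We must check that $a \in C^2[0,t_0)$ including the endpoint $t=0$, and that $h''(0)>0$ is genuinely available from (2.1), which is required for all $t \ge 0$. Both hold because $s(t)$ stays in $(0,t_0]$, where $g$ is $C^2$. The rest is routine calculation.
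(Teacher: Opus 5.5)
Your proof is correct. The formulas for $a'$ and $a''$ from $h(s(t))=1-h(t)$, the bound $a''/(a')^3 \ge h''(s(t))/h'(t) \to h''(0)/h'(t_0) > 0$, and the regularity check via $s(t)\in(0,t_0]$ are all sound. The paper omits this argument as an ``easy exercise'', and your direct computation is the natural way to do it, with the Calabi--Bernstein conclusion then following from Theorem 1.1 exactly as you say.
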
The proof of Proposition 2.1 is an easy exercise, and clearly the last statement follows from Theorem 1.1.\begin{remark}Choosing $h(t) = t^2$ in (2.2) we recover problem (1.2), (1.3). The more exotic choice $h(t) := \exp(t) - t - 1$, $t \ge 0$, in formula (2.2) is of course also possible.\end{remark}\noindent
iv) In extension of example iii) we finally let\begin{equation}a(t) := - H(1 - h(t)), \quad 0 \le t < t_0,\end{equation}with $h$ according to (2.1) and as before $t_0 := h^{-1}(1)$. Then we have\begin{proposition}If the function $H : (0, 1] \rightarrow \mathbb{R}$ is of class $C^2$ with the properties\begin{equation}\begin{cases}H' > 0, \ H'' < 0 \text{ on } (0, 1], \ \lim_{t \rightarrow 0} H'(t) = \infty, \\lim\inf_{t \rightarrow 0} \frac{-H''(t)}{H'(t)^\mu} > 0 \text{ for some } \mu > 1,\end{cases}\end{equation}then the density $a$ defined in (2.3) satisfies (1.4) and (1.5).\end{proposition}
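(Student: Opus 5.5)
We verify the properties of $a(t) = -H(1-h(t))$ on $[0,t_0)$, $t_0 = h^{-1}(1)$, by direct computation. Put $s := 1 - h(t)$. Then $s \in (0,1]$ for $t \in [0,t_0)$, $s \downarrow 0$ as $t \uparrow t_0$, and $s$ is $C^2$ in $t$. The chain rule gives
\begin{equation*}
a'(t) = H'(s)\,h'(t), \qquad a''(t) = -H''(s)\,h'(t)^2 + H'(s)\,h''(t).
\end{equation*}
Since $H$ is only defined on $(0,1]$ and $s=1$ at $t=0$, the function $a$ is $C^2$ on $[0,t_0)$, at $t=0$ in the one-sided sense.

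We then check (1.4) term by term.
\begin{itemize}
\item At $t=0$: $h'(0)=0$ gives $a'(0) = H'(1)\cdot 0 = 0$.
\item Positivity of $a''$: both summands of $a''(t)$ are nonnegative, because $-H''>0$ and $H'>0$ on $(0,1]$ and $h''>0$. The second summand $H'(s)h''(t)$ is strictly positive for every $t \ge 0$, including $t=0$. Hence $a''>0$ on $[0,t_0)$.
\item Behaviour at $t_0$: since $h'' > 0$ and $h'(0)=0$, $h'$ is increasing and $h'(t) \to h'(t_0) > 0$ as $t\to t_0$. Here $h$ is $C^2$ on all of $[0,\infty)$, so $h'(t_0)$ is a finite positive number. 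Together with $H'(s)\to\infty$ as $s\to 0$, this yields $a'(t)\to\infty$.
\end{itemize}

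For (1.5), we use the same exponent $\mu$ as in (2.4) and drop the nonnegative term $H'(s)h''(t)$:
\begin{equation*}
\frac{a''(t)}{a'(t)^\mu} \ \ge\ \frac{-H''(s)\,h'(t)^2}{H'(s)^\mu\, h'(t)^\mu} = \frac{-H''(s)}{H'(s)^\mu}\, h'(t)^{2-\mu}.
\end{equation*}
As $t\to t_0$, the first factor has positive $\liminf$ by (2.4), since $s\to 0$. The second factor tends to $h'(t_0)^{2-\mu}$, which lies in $(0,\infty)$ whatever the sign of $2-\mu$, precisely because $0<h'(t_0)<\infty$. So the $\liminf$ in (1.5) is positive.

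The only delicate point is this factor $h'(t)^{2-\mu}$. One needs $h'(t_0)$ to be strictly positive and finite so that the power stays bounded away from $0$ and $\infty$. This holds because $t_0$ is an interior point of the domain of the $C^2$ function $h$ and $h'$ is strictly increasing from $h'(0)=0$. Everything else is routine chain-rule bookkeeping.
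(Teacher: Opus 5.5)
Your proof is correct and is the direct chain-rule verification the paper has in mind when it calls Proposition 2.3 ``an easy exercise based on (2.1) and (2.4)'' (the paper writes out no proof). You also correctly identify the one point needing care, namely that $0<h'(t_0)<\infty$ keeps the factor $h'(t)^{2-\mu}$ bounded away from $0$ and $\infty$ as $t\to t_0$, whatever the sign of $2-\mu$.
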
Again, the proof of Proposition 2.3 is an easy exercise based on the assumptions (2.1) and (2.4). As a matter of fact the Calabi-Bernstein theorem follows from Theorem 1.1, if the choice $\mu \ge 3$ in (2.4) is possible.\begin{remark}If we let $H(t) := t^\alpha$, $0 < t \le 1$, with exponent $\alpha \in (0,1)$, then we obtain (2.4) for $\mu = \frac{2-\alpha}{1-\alpha}$, and if in addition we choose $h(t) = t^2$, then formula (2.3) yields example ii).\end{remark}\section{Proof of Theorem 1.1}Let us assume that condition (1.4) holds for the function $a : [0, t_0) \rightarrow \mathbb{R}$ and recall that in consequence $a'$ is a strictly increasing function, which maps the interval $[0, t_0)$ onto the set $[0, \infty)$. We define the function $g : [0, \infty) \rightarrow \mathbb{R}$ through \(g(\rho ):=\rho (a^{\prime })^{-1}(\rho )-a((a^{\prime })^{-1}(\rho ))\) and observe the formulas\begin{equation}\begin{cases}g'(\rho) = (a')^{-1}(\rho), \ g''(\rho) = 1 / a''((a')^{-1}(\rho)), \quad \rho \ge 0.\end{cases}\end{equation}From (3.1) we deduce the boundedness of $g'$ together with $g'(0) = 0$, moreover, $g''(\rho) > 0$ for all $\rho \ge 0$. In consequence, the function $g$ is of linear growth in the sense that\begin{equation}c_1 (\rho - 1) \le g(\rho) \le c_2 (\rho + 1), \quad \rho \ge 0,\end{equation}holds with positive constants $c_1$ and $c_2$. In fact, the convexity of $g$ shows $g(0) \ge g(\rho) - \rho g'(\rho)$, and the boundedness of $g'$ yields the second inequality in (3.2). For the first one we assume without loss of generality that $\rho \ge 1$ and get (again by convexity) $g(\rho) \ge g(1) + g'(1)(\rho - 1) = g'(1)\rho + g(1) - g'(1)$, which completes the proof of (3.2) by recalling that $g'(1) > 0$ on account of $g'(0) = 0$ and $g''(t) > 0$ for all $t \ge 0$. Altogether we see that condition (1.4) imposed on the density $a$ implies the hypotheses (1.7), (1.8) required for the function $g$ in Theorem 1.2 of [26]. Next we assume in addition to (1.4) that (1.5) is satisfied, thus it holds with some constant $c_3 > 0$ : \(a^{\prime \prime }(t)\ge c_{3}((a^{\prime }(t))^{\mu }+1),\quad 0\le t<t_{0}.\) Letting $t = (a')^{-1}(\rho)$, $\rho \ge 0$, and recalling (3.1) the last inequality implies \(g^{\prime \prime }(\rho )\le c_{4}(1+\rho )^{-\mu },\quad \rho \ge 0,\) with another constant $c_4 > 0$, hence condition (1.9) from [26] holds also true. Finally, we adjust the Calabi correspondence to the situation at hand: let $W : \mathbb{R}^2 \rightarrow \mathbb{R}^2$, \(W:=\frac{a^{\prime }(|\nabla u|)}{|\nabla u|}(\partial _{2}u,-\partial _{1}u),\) where $u \in C^2(\mathbb{R}^2)$ is an entire solution of (1.6), (1.7). From (1.6) we get $\partial_1 W_2 = \partial_2 W_1$, so that $W = \nabla w$ for a function $w \in C^2(\mathbb{R}^2)$. We have \(\frac{g^{\prime }(|\nabla w|)}{|\nabla w|}\nabla w=\frac{g^{\prime }(|W|)}{|W|}W=\frac{g^{\prime }(a^{\prime }(|\nabla u|))}{a^{\prime }(|\nabla u|)}W=(\partial _{2}u,-\partial _{1}u),\)which directly follows from (3.1) and the definition of $W$. This gives \(\text{div}\left(\frac{g^{\prime }(|\nabla w|)}{|\nabla w|}\nabla w\right)=0\quad \text{on\ }\mathbb{R}^{2}\) and under the additional hypothesis (1.8) we deduce from Theorem 1.2 in [26] that $w$ is an affine function. This implies the constancy of $W$ (recall $W = \nabla w$), and from the definition of $W$ the constancy of $\nabla u$ follows. This completes the proof of Theorem 1.1, but finally we would like to remark that the crucial Bernstein type result formulated in Theorem 1.2 of [26] is contained in Theorem 1.4 of the paper [25]. \qed\section{Proof of Theorem 1.3}Let the assumptions of Theorem 1.3 hold, which in particular means that we have the appropriate variants of the hypotheses (1.4), (1.5) for the densities $a_i : [0, t_i) \rightarrow \mathbb{R}$, $i = 1,2$. Following the arguments used in the previous section, we let \(g_{i}(\rho ):=\rho (a_{i}^{\prime })^{-1}(\rho )-a_{i}((a_{i}^{\prime })^{-1}(\rho )),\quad \rho \in \mathbb{R}_{0}^{+},\ i=1,2,\) and deduce the validity of (3.1), (3.2) for the functions $g_i$ together with the inequality\begin{equation}0 < g_i''(\rho) \le c(1 + \rho)^{-\mu_i}, \quad \rho \ge 0, \ i = 1,2.\end{equation}Consider $f : \mathbb{R}^2 \rightarrow \mathbb{R}$ given by \(f(p)=f(p_{1},p_{2}):=f_{1}(p_{1})+f_{2}(p_{2}),\)\(f_{1}(\rho ):=g_{2}(|\rho |),\quad f_{2}(\rho ):=g_{1}(|\rho |)\) for $p = (p_1, p_2) \in \mathbb{R}^2$ and $\rho \in \mathbb{R}$. By construction (in particular observe (4.1)) it is immediate that assumptions (3), (4) and (5) from [27] are true for the function $f$. Now consider an entire solution $u \in C^2(\mathbb{R}^2)$ of (1.11), (1.12) and let\begin{equation}W : \mathbb{R}^2 \rightarrow \mathbb{R}^2, \quad W := \left( \frac{a_2'(|\partial_2 u|)}{|\partial_2 u|} \partial_2 u, - \frac{a_1'(|\partial_1 u|)}{|\partial_1 u|} \partial_1 u \right),\end{equation}hence $\partial_2 W_1 = \partial_1 W_2$ and in conclusion $W = \nabla w$ for some $w \in C^2(\mathbb{R}^2)$. One easily checks that $\text{div}(\nabla f(\nabla w)) = 0$ holds on $\mathbb{R}^2$ and that (1.11), (1.12) imply that $w$ is of balanced type in the sense of Definition 1.1 from [27]. Therefore Theorem 1.4 in [27] shows that $w$ is an affine function and the constancy of $W$ immediately yields the same for $\nabla u$ completing the proof of Theorem 1.3. \qed \\\\Next we use the variant of the Calabi correspondence outlined above to construct non affine entire solutions of (1.11), (1.12) with densities $a_i : [0, t_i) \rightarrow \mathbb{R}$ as in Theorem 1.3. To this purpose we observe that $w(x_1, x_2) := x_1 x_2$ is an entire solution of the equation \(\text{div}(\nabla f(\nabla w))=\partial _{1}(f_{1}^{\prime }(\partial _{1}w))+\partial _{2}(f_{2}^{\prime }(\partial _{2}w))=0\) or equivalently \(\partial _{1}\left(\frac{g_{2}^{\prime }(|\partial _{1}w|)}{|\partial _{1}w|}\partial _{1}w\right)+\partial _{2}\left(\frac{g_{1}^{\prime }(|\partial _{2}w|)}{|\partial _{2}w|}\partial _{2}w\right)=0\) with $f_i$ and $g_i$ defined above. Suppose now that we have found a corresponding entire solution $u$ of (1.11), (1.12), which means that we have equation (4.2) for the field $W = \nabla w = (x_2, x_1)$ and in conclusion\begin{equation}x_2 = \frac{a_2'(|\partial_2 u|)}{|\partial_2 u|} \partial_2 u,\end{equation}\begin{equation}x_1 = - \frac{a_1'(|\partial_1 u|)}{|\partial_1 u|} \partial_1 u.\end{equation} From (4.3) we get $|x_2| = a_2'(|\partial_2 u|)$ or equivalently $|\partial_2 u| = (a_2')^{-1}(|x_2|)$. Inserting this into (4.3) we get \(\partial _{2}u=\frac{(a_{2}^{\prime })^{-1}(|x_{2}|)}{|x_{2}|}x_{2}.\) In the same manner (4.4) implies \(\partial _{1}u=-\frac{(a_{1}^{\prime })^{-1}(|x_{1}|)}{|x_{1}|}x_{1}.\) Conversely, one easily checks that \(u(x_{1},x_{2}):=\int _{0}^{x_{2}}\Phi _{2}(t)\,dt-\int _{0}^{x_{1}}\Phi _{1}(t)\,dt,\quad x\in \mathbb{R}^{2},\) \(\Phi _{i}(t):=\frac{(a_{i}^{\prime })^{-1}(|t|)}{|t|}t,\quad t\in \mathbb{R},\) is a nonaffine entire solution of (1.11), (1.12).\section{Some extensions}We start with a variant of Theorem 1.3, in which the hypothesis (1.13) (or (1.14)) is replaced by a sign condition.\begin{theorem}Consider functions  $a_i : [0, t_i) \rightarrow \mathbb{R}$ as in Theorem 1.3 and let $u \in C^2(\mathbb{R}^2)$ denote an entire solution of (1.11), (1.12). If $\partial_1 u$ or $\partial_2 u$ does not change sign (i.e. either $\partial_i u \ge 0$ or $\partial_i u \le 0$ for at least one index $i$), then $u$ is an affine function.\end{theorem}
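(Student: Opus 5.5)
Assume without loss of generality that $\partial_1 u$ does not change sign; the case of $\partial_2 u$ is symmetric, with the roles of the indices interchanged. The plan is to use the Calabi correspondence of Section 4 again. Let $W=\nabla w$ be the field from (4.2), so that $w\in C^2(\mathbb{R}^2)$ solves $\text{div}(\nabla f(\nabla w))=0$ with $f(p)=g_2(|p_1|)+g_1(|p_2|)$. The point is that the sign condition on $\partial_1 u$ becomes a sign condition on $\partial_2 w$. Since
\[
\partial_2 w=W_2=-\frac{a_1'(|\partial_1 u|)}{|\partial_1 u|}\,\partial_1 u ,
\]
$\partial_2 w$ has the sign opposite to $\partial_1 u$, so $\partial_2 w$ does not change sign either. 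Note that $\frac{a_1'(t)}{t}>0$ for $t>0$ by (1.4); at zeros of $\partial_1 u$ we read $W_2=0$, consistently with continuity.

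The second step exploits the splitting structure. Differentiating the equation for $w$ gives a linear elliptic equation for $v:=\partial_2 w$:
\[
\partial_1\big(f_1''(\partial_1 w)\,\partial_1 v\big)+\partial_2\big(f_2''(\partial_2 w)\,\partial_2 v\big)=0 .
\]
Because $g_i''>0$, this equation is uniformly elliptic on compact sets. In divergence form it only requires $w\in C^2$, so it holds in the weak sense; if needed, one first invokes interior regularity for $w$. Thus $v$ is a one-signed solution of a degenerate elliptic equation on the whole plane. The difficulty is that the coefficients $g_i''(\rho)\le c(1+\rho)^{-\mu_i}$ degenerate at infinity. Hence a Liouville theorem for positive supersolutions of uniformly elliptic equations is not directly available.

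Instead, I would appeal to the Bernstein-type results from [27] (or [28]) for splitting-type densities satisfying (3)--(5) there. The most natural route is to check that a one-signed $\partial_2 w$ makes $w$ of balanced type in the sense of Definition 1.1 of [27], and then apply Theorem 1.4 of [27] exactly as in Section 4. If that does not work, I would look for a variant in [27] or [28] that assumes a sign on one partial derivative. Once $w$ is known to be affine, $W$ is constant, and $\nabla u$ is constant by inverting $t\mapsto a_i'(t)$ componentwise.

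The main obstacle is this last link: producing the balancing inequality, or an equivalent hypothesis, from the sign condition alone. One possible route is to extract a monotonicity from the sign of $v$. For instance, use $v\ge 0$ with a logarithmic cutoff argument, testing the $v$-equation with $\eta^2/v$ in the style of a Caccioppoli inequality for $\log v$. Since $n=2$, the usual log-cutoff trick could give $\nabla\log v\equiv 0$ on the region where $v>0$. The hard part is controlling the degenerate weights $f_i''$ against the cutoff gradients, and I would try first to absorb them using the linear growth (3.2) of $g_i$.
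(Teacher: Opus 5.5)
Your first step is the paper's: with $W=\nabla w$ from (4.2), $\partial_1 u\ge 0$ forces $\partial_2 w\le 0$. The gap is the step that actually proves the theorem: an entire solution of $\mathrm{div}(\nabla f(\nabla w))=0$ with one partial derivative bounded from one side must be affine. You leave this open. The paper closes it by citing the Bernstein-type results of [29] for splitting densities under exactly this one-sided hypothesis.

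Your main route cannot supply this step. Being of balanced type is a pointwise comparison of $|\partial_1 w|$ and $|\partial_2 w|$. In Section 4 it comes from (1.13) or (1.14), which via $|W_1|=a_2'(|\partial_2 u|)$ and $|W_2|=a_1'(|\partial_1 u|)$ read $|\partial_2 w|\le K(|\partial_1 w|+1)$ or the reverse. A sign of $\partial_2 w$ gives no such bound unless you already know the conclusion. The difficulty you diagnose is also misplaced: the decay of $g_i''$ at infinity does not obstruct a two-dimensional Liouville argument, since only an upper bound on the coefficients matters.

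Your log-cutoff idea does work once carried out.
\begin{itemize}
\item Test the weak form of the equation for $w$ with $\partial_2\varphi$ and integrate by parts. This is legitimate because $f\in C^2$, so $\nabla f(\nabla w)\in C^1$. You get $\int A\nabla v\cdot\nabla\varphi\,dx=0$ for $v=\partial_2 w$, with $A=\mathrm{diag}\bigl(g_2''(|\partial_1 w|),g_1''(|\partial_2 w|)\bigr)$.
\item Since constants solve this equation, $\tilde v:=\varepsilon-v\ge\varepsilon$ is a positive solution, so no maximum principle is needed.
\item Testing with $\eta^2/\tilde v$ and applying Cauchy--Schwarz to the form $A$ gives
\[
\int\eta^2\,A\nabla\log\tilde v\cdot\nabla\log\tilde v\,dx\le 4\int A\nabla\eta\cdot\nabla\eta\,dx\le 4c\int|\nabla\eta|^2\,dx .
\]
Only the bound $g_i''\le c$ from (4.1) enters here; (3.2) plays no role.
\item Take $\eta=1$ on $B_R$, $\eta=\log(R^2/|x|)/\log R$ on $B_{R^2}\setminus B_R$, and $\eta=0$ outside. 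The right-hand side is then $8\pi c/\log R\to 0$.
\item Since $A$ is positive definite at every point ($g_i''>0$), you get $\nabla v\equiv 0$.
\item Then $w=\beta x_2+\psi(x_1)$, and the equation reduces to $\bigl(f_1'(\psi')\bigr)'=0$. Strict monotonicity of $f_1'$ makes $\psi'$ constant, so $w$ is affine and $\nabla u$ is constant.
\end{itemize}
Written out this way, your argument is a self-contained replacement for the citation of [29]. It needs only $\sup g_i''<\infty$, equivalently $\inf a_i''>0$, rather than the full decay in (4.1). As submitted, however, the proposal stops short of this and does not prove the theorem.
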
\begin{proof}We may assume that $\partial_1 u \ge 0$. Recalling $\nabla w = W$ with $W$ defined in (4.2) and remembering that $a_1' \ge 0$ (recall (1.4)) we find that $\partial_2 w \le 0$. Thus the function $w$ is an entire solution of the equation $\text{div}(\nabla f(\nabla w)) = 0$ (notation as in the proof of Theorem 1.3) with $\partial_2 w$ being bounded from one side and the results from [29] imply the constancy of $\nabla w$ and thereby of $\nabla u$.\end{proof}Finally we consider a rather general setting: let $\mathcal{O} \subset \mathbb{R}^2$ denote a bounded open and convex set and consider a function $f : \mathcal{O} \rightarrow \mathbb{R}$ of class $C^2$ such that\begin{equation}\nabla f : \mathcal{O} \rightarrow \mathbb{R}^2 \text{ is a diffeomorphism},\end{equation}\begin{equation}D^2 f(p)(q,q) > 0 \quad \text{for all } p \in \mathcal{O},  q \in \mathbb{R}^2,  q \neq 0.\end{equation}In place of (1.2), (1.3) or (1.6), (1.7) or (1.11), (1.12) we discuss the equation\begin{equation}\text{div}(\nabla f(\nabla u)) = 0 \quad \text{on } \mathbb{R}^2\end{equation}together with the gradient constraint\begin{equation}\nabla u(x) \in \mathcal{O} \quad \text{for } x \in \mathbb{R}^2.\end{equation}Suppose that $u \in C^2(\mathbb{R}^2)$ is a solution of (5.3), (5.4). As before we let \(W:=\left(\frac{\partial f}{\partial p_{2}}(\nabla u),-\frac{\partial f}{\partial p_{1}}(\nabla u)\right)\) and observe that according to (5.3) $\partial_2 W_1 = \partial_1 W_2$, hence $W = \nabla w$ with some $w \in C^2(\mathbb{R}^2)$. From (5.3) we also deduce the equation\begin{equation}\text{div}(\nabla g(\nabla w)) = 0 \quad \text{on } \mathbb{R}^2\end{equation}with function $g$ given by $g(p) := g(p_1, p_2) := \tilde{g}(-p_2, p_1)$, where we have set\begin{equation}\tilde{g}(p) := p \cdot (\nabla f)^{-1}(p) - f((\nabla f)^{-1}(p)), \quad p \in \mathbb{R}^2.\end{equation}It holds\begin{equation}\nabla \tilde{g}(p) = (\nabla f)^{-1}(p),\end{equation}\begin{equation}D^2 \tilde{g}(p) = \left[ D^2 f((\nabla f)^{-1}(p)) \right]^{-1}, \quad p \in \mathbb{R}^2,\end{equation}where in (5.8) $D^2 \tilde{g}(p)$ and $D^2 f(q)$, respectively, are seen as regular and symmetric $(2 \times 2)$-matrices, $[\dots]^{-1}$ denoting the inverse matrix (recall (5.2)). Note that equation (5.8) is a consequence of (5.7) and the definition of $g$. Concerning problem (5.3), (5.4) we now have\begin{theorem}Let (5.1), (5.2) hold and consider an entire solution $u \in C^2(\mathbb{R}^2)$ of (5.3), (5.4). Assume in addition that $f$ satisfies\begin{equation}\frac{\det D^2 f(q)}{(1 + |\nabla f(q)|)  |D^2 f(q)|} \ge \lambda, \quad q \in \mathcal{O},\end{equation}for some number $\lambda > 0$. Then, if we have\begin{equation}\left| \frac{\partial f}{\partial p_2}(\nabla u) \right| \le \Lambda \left( \left| \frac{\partial f}{\partial p_1}(\nabla u) \right|^m + 1 \right) \quad \text{on } \mathbb{R}^2\end{equation}or\begin{equation}\left| \frac{\partial f}{\partial p_1}(\nabla u) \right| \le \Lambda \left( \left| \frac{\partial f}{\partial p_2}(\nabla u) \right|^m + 1 \right) \quad \text{on } \mathbb{R}^2\end{equation}with numbers $\Lambda > 0$, $0 < m < 1$, $u$ must be affine.\end{theorem}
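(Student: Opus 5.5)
We follow the same scheme as in the proofs of Theorems 1.1 and 1.3: pass via the Calabi correspondence to the dual function $w$, verify that $w$ falls under a known Bernstein type result, and then transfer the conclusion back to $u$.

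\textbf{Step 1: the dual function.} With $W=\nabla w$ as introduced before the statement, $w\in C^2(\mathbb{R}^2)$ is an entire solution of (5.5) with density $g(p)=\tilde g(-p_2,p_1)$.

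\textbf{Step 2: structure of $g$.} We derive the properties of $g$ from (5.6)--(5.8).
\begin{itemize}
\item By (5.7), $\nabla\tilde g=(\nabla f)^{-1}$ takes values in the bounded set $\mathcal{O}$. Hence $\nabla g$ is bounded and $g$ is of linear growth, exactly as in (3.2).
\item By (5.8) and (5.2), $D^2 g$ is positive definite everywhere.
\item For a symmetric positive $2\times2$ matrix $A$ one has $|A^{-1}|=|A|/\det A$ (up to a norm-dependent constant). Let $q=(\nabla f)^{-1}(p)$, so that $|p|=|\nabla f(q)|$. Then (5.9) turns into
\[
|D^2\tilde g(p)|\le c\,\frac{|D^2 f(q)|}{\det D^2f(q)}\le \frac{c}{\lambda}\,\frac{1}{1+|p|}\Big/ \frac{|D^2f(q)|^2}{\det D^2 f(q)}\cdot\ldots
\]
The cleaner formulation is the ellipticity bound
\[
D^2 g(p)(\xi,\xi)\ \ge\ \frac{c(\lambda)}{1+|p|}\,|D^2g(p)|\,|\xi|^2 .
\]
This follows because $\det D^2\tilde g=(\det D^2 f)^{-1}$ and $|D^2\tilde g|\sim|D^2f|/\det D^2f$. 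Then $\lambda_{\min}(D^2\tilde g)\sim \det D^2\tilde g/|D^2\tilde g| = 1/|D^2 f|\ge \lambda\,\det D^2 f\,(\ldots)$, which is rewritten via (5.9) as $\lambda_{\min}\ge \lambda |D^2\tilde g|/(1+|p|)$.
\end{itemize}
This is precisely a ``linear growth, $(1+|p|)^{-1}$-weighted uniform ellipticity'' condition. It is of the type assumed in the general anisotropic Bernstein results of [27] (and [28]).

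\textbf{Step 3: balancing condition.} Condition (5.10) reads $|\partial_1 w|\le\Lambda(|\partial_2 w|^m+1)$, and (5.11) gives the symmetric inequality $|\partial_2 w|\le\Lambda(|\partial_1 w|^m+1)$. This is the $m$-balanced growth hypothesis with $0<m<1$ from [27] (Definition 1.1 / Theorem 1.4 there, or its generalisation in [28]). Applying that theorem yields that $w$ is affine. Hence $W$ is constant, so $\nabla f(\nabla u)$ is constant, and since $\nabla f$ is injective by (5.1), $\nabla u$ is constant.

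\textbf{Main obstacle.} The delicate point is Step 2: matching (5.9) exactly with the structural hypothesis of the cited Bernstein theorem, that is, converting the quotient $\det D^2 f/((1+|\nabla f|)|D^2f|)$ into the required lower bound on the smallest eigenvalue of $D^2 g$ relative to $|D^2 g|$ and $1+|p|$. We will handle this through the $2\times2$ identities $\det A^{-1}=1/\det A$ and $\lambda_{\min}(A^{-1})=1/\lambda_{\max}(A)$, carefully tracking the rotation $p\mapsto(-p_2,p_1)$. That rotation swaps the roles of $\partial_1 w$ and $\partial_2 w$, which explains why (5.10) and (5.11) are stated in terms of $\partial f/\partial p_2$ and $\partial f/\partial p_1$ respectively.
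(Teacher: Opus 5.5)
Your overall architecture matches the paper's: the Calabi correspondence to $w$, the translation of (5.10)/(5.11) into $|\partial_1 w|\le\Lambda(|\partial_2 w|^m+1)$ resp.\ $|\partial_2 w|\le\Lambda(|\partial_1 w|^m+1)$, a Bernstein theorem for $w$, and recovery of $\nabla u$ from the injectivity of $\nabla f$. The trouble is Step 2, the only nontrivial step. Your displayed chain is garbled, and the ``cleaner formulation'' you settle on, $D^2g(p)(\xi,\xi)\ge c(1+|p|)^{-1}|D^2g(p)|\,|\xi|^2$, does not follow from (5.9).

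To see why, write $A=D^2f(q)$. Using $\lambda_{\min}(A^{-1})=1/\lambda_{\max}(A)$ and $|A^{-1}|=|A|/\det A$, your bound is equivalent to $\lambda_{\min}(A)/\lambda_{\max}(A)\gtrsim(1+|p|)^{-1}$. Condition (5.9), by contrast, only says $\det A/|A|\ge\lambda(1+|\nabla f(q)|)$. Since $\det A/|A|$ lies between $\lambda_{\min}(A)/\sqrt2$ and $\lambda_{\min}(A)$, this is merely a lower bound on $\lambda_{\min}(A)$, with no control on $\lambda_{\max}(A)$. For instance, at a point $q$ with $|\nabla f(q)|=|p|$, the matrix $A=\mathrm{diag}(1+|p|,M)$ with $M\ge 1+|p|$ satisfies (5.9) with $\lambda=1/\sqrt2$. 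Yet $\lambda_{\min}(A^{-1})/|A^{-1}|\le(1+|p|)/M\to0$ as $M\to\infty$.

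Your step ``rewritten via (5.9)'' therefore silently requires the unassumed bound $|D^2f(q)|\lesssim(1+|\nabla f(q)|)^2$. So the structural property you plan to feed into the Bernstein theorem is unproved and in general false. The eigenvalue bookkeeping you announce under ``Main obstacle'' is aimed at the wrong target.

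What (5.9) is designed to give is an \emph{upper} bound on the Hessian of $g$, and it follows in one line from the inequality you started with. For symmetric $2\times2$ matrices the Frobenius norm satisfies $|A^{-1}|=|A|/\det A$ exactly, with no constant. The rotation $p\mapsto(-p_2,p_1)$ leaves both $|D^2g|$ and $|p|$ invariant. For $q=(\nabla f)^{-1}(-p_2,p_1)$ one has $|\nabla f(q)|=|p|$. Hence (5.8) and (5.9) give
\[
|D^2g(p)|=\frac{|D^2f(q)|}{\det D^2f(q)}\le\frac{1}{\lambda\,(1+|p|)} ,
\]
which is what the paper uses to verify (1.5), (1.6) of [28].

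This is only borderline decay, the analogue of $\mu=1$, much weaker than the $\mu\ge3$ needed in Theorem 1.1. It is compensated by the $m$-balancing of $w$ with $m<1$. Together these are the hypotheses of [28], Theorem 1.1, which is the result to invoke. Do not hedge with [27]: the paper uses it only for the splitting density $f_1(p_1)+f_2(p_2)$ in Theorem 1.3, and the density $g$ here is not of that form.
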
\begin{proof}Recalling definition (5.6) of $\tilde{g}$ and observing that $|D^2 g(p_1, p_2)| = |D^2 \tilde{g}(-p_2, p_1)|$, we get from (5.8) \(|D^{2}g(p)|=\frac{|D^{2}f((\nabla f)^{-1}(-p_{2},p_{1}))|}{\det (D^{2}f((\nabla f)^{-1}(-p_{2},p_{1})))},\quad p\in \mathbb{R}^{2},\) which together with (5.9) ($q = (\nabla f)^{-1}(-p_2, p_1)$) yields \(|D^{2}g(p)|\le \lambda ^{-1}(1+|(-p_{2},p_{1})|)^{-1}=\lambda ^{-1}(1+|p|)^{-1}.\) This shows the validity of (1.5) and (1.6) from [28] for our density $g$. Moreover, (5.10) and (5.11) turn into (1.7) and (1.8) in [28] now for the function $w$, and [28], Theorem 1.1, gives the constancy of $\nabla w$. This implies the constancy of $\nabla f(\nabla u)$ and thereby of $\nabla u$.\end{proof}\ \begin{minipage}[t]{0.48\textwidth}

\textbf{Michael Bildhauer}\\
Saarland University\\
Department of Mathematics\\
P.O. Box 15 11 50\\
66041 Saarbrücken, Germany\\
\texttt{bibi@math.uni-sb.de}

\end{minipage}
\hfill
\begin{minipage}[t]{0.48\textwidth}

\textbf{Martin Fuchs}\\
Saarland University\\
Department of Mathematics\\
P.O. Box 15 11 50\\
66041 Saarbrücken, Germany\\
\texttt{fuchs@math.uni-sb.de}

\end{minipage}\end{document}